\newtheorem{theorem}{Theorem}
\theoremstyle{definition}
\newtheorem{example}{Example}
\theoremstyle{remark}
\newtheorem{remark}{Remark}
\newcommand{\C}{\mathbb{C}}
\newcommand{\D}{\Omega}
\newcommand{\Dc}{\overline{\Omega}}
\newcommand{\zb}{\overline{z}}
\newcommand{\dbar}{\overline{\partial}}
\newcommand{\ep}{\varepsilon}
\begin{document}

\title[Compactness of the $\overline{\partial}$-Neumann problem]{A potential
theoretic characterization of compactness of the $\overline{\partial}$-Neumann
problem}
\author{S\"{o}nmez \c{S}ahuto\u{g}lu}
\address{University of Toledo, Department of Mathematics, Toledo, OH 43606, USA}
\email{sonmez.sahutoglu@utoledo.edu}

\subjclass[2010]{32W05}
\date{}
\keywords{Potential theory, $\dbar$-Neumann problem}
\thanks{This article is based on a part of the author’s Ph.D. thesis
\cite{SahutogluThesis}.}
\thanks{The author is supported in part by University of Toledo's Summer
Research Awards and Fellowships Program.}

\begin{abstract}
We give a potential theoretic characterization for compactness of the 
$\dbar$-Neumann problem on smooth bounded pseudoconvex domains in $\C^n.$
\end{abstract}

\maketitle

Let $\D$ be a domain in $\C^n$ with $C^{\infty}$-smooth boundary. The domain
$\D$ is said to be pseudoconvex if the Levi form of $\D$, the restriction of the
complex Hessian of a defining function onto complex tangent space, is positive
semi-definite on the boundary, $b\D,$ of $\D$. On bounded pseudoconvex domains,
H\"ormander \cite{Hormander65} showed that the $\dbar$-Neumann operator on $\D,$
the solution operator for $\Box$ is a bounded operator on $L_{(0,1)}^{2}(\D)$
(here $\Box =\dbar\dbar^*+\dbar^*\dbar$ and $\dbar^*$ is the Hilbert space
adjoint of $\dbar$).  We refer the reader to \cite{ChenShawBook,StraubeBook} for
more information about the $\dbar$-Neumann problem.

Compactness of the $\dbar$-Neumann operator is important to study as it is
weaker than global regularity \cite{KohnNirenberg65} and it interacts with the
boundary geometry. For example, even though the general case is still open, in
same cases it is know that existence of an analytic disc in the boundary is an
obstruction for compactness of the $\dbar$-Neumann problem (see, for example,
\cite{FuStraube98,FuStraube01,SahutogluThesis, SahutogluStraube06,StraubeBook}).
Recently, Straube and  Munasinghe \cite{MunasingheStraube07} (see also
\cite{MunasingheThesis}) studied compactness using geometric conditions
involving short time flows on the boundary (this was done in $\C^2$ earlier by
Straube \cite{Straube04}). \c{C}elik and Straube \cite{CelikStraube09} (see also
\cite{CelikThesis}) explored compactness in relation to the so called
``compactness multipliers''.

Compactness of the $\dbar$-Neumann problem has been studied using some
potential theoretic conditions  by Catlin \cite{Catlin84} using property 
$(P)$ and later by McNeal \cite{McNeal02} using property $(\widetilde{P}).$ In
this paper we would like to give a new potential theoretic characterization for
compactness of the $\dbar$-Neumann problem. We refer the reader to
\cite[Proposition 4.2]{StraubeBook} for other equivalent conditions. We would
like to note that a similar characterization has been done by Haslinger in
\cite{Haslinger}. 

Let $\D$ be a smooth bounded domain in $\C^n,K\subset b\D,$ and $U$ be an open
neighborhood of $K.$ We denote the $L^2$ norm and  Sobolev $-1$ norm of a
function $f\in L^2(\D)$ by $\|f\|$ and $\|f\|_{-1},$ respectively. Let
$I=\{i_1,i_2,\ldots,i_p\}\subset \mathbb{N}$ such that $j_1<i_2<\cdots<i_p.$
Then we use the notation $dz_I=dz_{i_1}\wedge dz_{i_2}\wedge \cdots \wedge 
dz_{i_p}$ and  $d\zb_I=d\zb_{i_1}\wedge d\zb_{i_2}\wedge \cdots \wedge 
d\zb_{i_p}.$ Define 
\[ C_{0,(p,q)}^{\infty}(U)=\left \{\sum_{\scriptscriptstyle |I|=p,
|J|=q}f_{IJ}dz_I\wedge d\bar z_J: f_{IJ}\in C^{\infty}_0(U)\right\}\]
for $0\leq q\leq n.$ Define $\lambda_{(p,q)}(U)$ as follows: for $0\leq p\leq
n$ and $1\leq q\leq n-1$  let us define 
\[\lambda_{(p,q)}(U)=\inf \left\{ \frac{\|\overline{\partial}f\|^2+\| 
\overline{\partial}^*f\|^2}{\|f\|^2}: f\in Dom(\overline{\partial}^*)\cap
C_{0,(p,q)}^{\infty}(U),f\not\equiv 0 \right\}\]
and
\[ \lambda_{(p,0)}(U) =\inf \left\{ \frac{\|\overline{\partial}f\|^2}{\|f\|^2}:
f\in (Ker\overline{\partial})^{\bot}\cap C_{0,(p,0)}^{\infty}(U),f\not\equiv
0 \right\}  \]  
where $(Ker\overline{\partial})^{\bot}$ is the orthogonal complement of
$(Ker\overline{\partial})$ in $L_{(p,0)}^2(\D)$ (square integrable
$(p,0)$-forms on $\D$). Notice that $\lambda_{(p,q)}(U)\leq \lambda_{(p,q)}(V)$
if $ V\subset U.$ In this paper a finite type is meant in the sense of
D'Angelo \cite{D'Angelo82}  and infinite type point means a point that is not
finite type.  

\begin{theorem}\label{thm1}
Let $\D$ be a smooth bounded pseudoconvex domain in $\C^n,n\geq 2$ and 
$0\leq p\leq n,0\leq q \leq n-1,$ be given. Then the following are equivalent:
\begin{itemize}
\item[(i)] the $\overline{\partial}$-Neumann operator $N_{(p,q)}$ of $\D$ is
compact on $L_{(p,q)}^2(\D),$
\item[(ii)] for any compact set $K \subset b\D$ and $M>0$ there exists an open
neighborhood $U$ of $K$ such that $\lambda_{(p,q)}(U)>M,$
\item[(iii)] for any  $M>0$ there exists an open neighborhood $U$ of the set of 
infinite type points in $b\D$ such that
$\lambda_{(p,q)}(U)>M.$
\end{itemize}
\end{theorem}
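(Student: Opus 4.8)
The plan is to route everything through the standard compactness estimate: on a bounded pseudoconvex domain, compactness of $N_{(p,q)}$ is equivalent to the assertion that for every $\ep>0$ there is a constant $C_\ep$ with
\[
\|f\|^2 \le \ep\bigl(\|\dbar f\|^2 + \|\dbar^* f\|^2\bigr) + C_\ep\|f\|_{-1}^2
\]
for all $f \in Dom(\dbar)\cap Dom(\dbar^*)$ at the $(p,q)$ level (\cite[Proposition 4.2]{StraubeBook}), which is in turn equivalent to compactness of the embedding of $Dom(\dbar)\cap Dom(\dbar^*)$, with its graph norm, into $L^2_{(p,q)}(\D)$. Writing $Q(f,f)=\|\dbar f\|^2+\|\dbar^* f\|^2$, I would prove the three conditions in the cycle $(i)\Rightarrow(ii)\Rightarrow(iii)\Rightarrow(i)$. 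The case $q=0$ runs in parallel, with $Dom(\dbar^*)$ replaced by $(Ker\,\dbar)^{\perp}$ and the term $\|\dbar^*f\|$ omitted throughout, so I will concentrate on $q\geq 1$.

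For $(i)\Rightarrow(ii)$ I would argue by contradiction. If (ii) failed for some compact $K\subset b\D$ and some $M$, then along the shrinking neighborhoods $U_k=\{z:\mathrm{dist}(z,K)<1/k\}$ there would exist forms $f_k\in Dom(\dbar^*)\cap C_{0,(p,q)}^{\infty}(U_k)$ with $\|f_k\|=1$ and $Q(f_k,f_k)\le M+1/k$. These are bounded in the graph norm, so by the compact embedding a subsequence converges in $L^2_{(p,q)}(\D)$ to some $f$ with $\|f\|=1$. Since $K\subset b\D$, every compact $L\subset\subset\D$ is eventually disjoint from $U_k$, so $f$ vanishes on each such $L$, forcing $f=0$ and contradicting $\|f\|=1$.

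The implication $(ii)\Rightarrow(iii)$ is immediate once one observes that finite type is an open condition on $b\D$ (D'Angelo \cite{D'Angelo82}), so the set $\Sigma$ of infinite type points is a closed, hence compact, subset of $b\D$; applying (ii) with $K=\Sigma$ gives (iii). The substance of the theorem is $(iii)\Rightarrow(i)$, which I would prove by localizing and reassembling a global compactness estimate. Let $U$ be a neighborhood of $\Sigma$ with $\lambda_{(p,q)}(U)>M$. On $U$ the definition of $\lambda_{(p,q)}$ gives $\|f\|^2\le M^{-1}Q(f,f)$ for admissible $f$ supported there, and since $M$ is arbitrary this supplies the $\ep$-term with no error. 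On $b\D\setminus U$, which consists of finite type points, Catlin's theorem yields subelliptic estimates, hence local compactness estimates with a Sobolev gain; in the interior elliptic regularity gives such estimates directly. I would then pick a finite cover of $\Dc$ subordinate to these regions with a partition of unity $\{\phi_j^2\}$, write $\|f\|^2=\sum_j\|\phi_j f\|^2$, and apply the relevant local estimate to each piece, noting that since the $\phi_j$ are real and smooth up to $b\D$, multiplication by $\phi_j$ preserves $Dom(\dbar^*)$.

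The main obstacle is this gluing step. The commutators $[\dbar,\phi_j]$ and $[\dbar^{*},\phi_j]$ generate zeroth order error terms of size $\|f\|$ that must be reabsorbed. I would control these with the interpolation inequality $\|f\|^2\le \eta\,Q(f,f)+C_\eta\|f\|_{-1}^2$ (the small-constant/large-constant device), which converts each leftover $\|f\|^2$ into a small multiple of $Q(f,f)$ plus a large multiple of $\|f\|_{-1}^2$. Combining the subelliptic gain on the finite type part, the factor $M^{-1}$ coming from (iii) on $U$, and these interpolated errors then yields, for every $\ep>0$, an estimate of the required form, and hence compactness of $N_{(p,q)}$; a routine density argument reduces the general case to smooth forms in $Dom(\dbar^*)$, where the commutator computations are justified.
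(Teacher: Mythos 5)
Your overall architecture coincides with the paper's: the same contradiction argument for $(i)\Rightarrow(ii)$ (your version, which extracts an $L^2$-convergent subsequence via the compact embedding of $Dom(\dbar)\cap Dom(\dbar^*)$ into $L^2$ and notes that the limit must vanish because the supports collapse onto $b\D$, is a legitimate minor variant of the paper's, which instead uses the compactness estimate to force a uniform $W^{-1}$-separation of the $f_k$ and then contradicts compactness of $L^2\hookrightarrow W^{-1}$); the same observation that the infinite type set is compact for $(ii)\Rightarrow(iii)$; and the same localization scheme for $(iii)\Rightarrow(i)$.

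However, the step you yourself single out as the main obstacle is resolved incorrectly. The inequality $\|f\|^2\le\eta\,Q(f,f)+C_\eta\|f\|_{-1}^2$, asserted for every $\eta>0$, is not an interpolation inequality and not an instance of the small-constant/large-constant device: it is precisely the compactness estimate, i.e., it is equivalent to statement (i), the very thing you are trying to prove. Invoking it to absorb the commutator errors is circular. Nor can you simply move those errors to the left-hand side: the commutator contributes a term of order $M^{-1}\|\nabla\phi_j\|_\infty^2\,\|f\|^2_{L^2(\mathrm{supp}\,\nabla\phi_j)}$, and since the cutoff adapted to the neighborhood $U=U_M$ of the infinite type set must have ever larger gradient as $M$ grows, there is no reason for $M^{-1}\|\nabla\phi_j\|_\infty^2$ to be small. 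What saves the argument --- and what the paper does --- is that $\nabla\phi_j$ can be arranged to be supported away from the infinite type set, in the region covered by interior ellipticity and Catlin's subelliptic estimates. There one has $\|g\|_s\lesssim (\|\dbar g\|+\|\dbar^* g\|+\|g\|)$ for some $s>0$, and the genuine Sobolev interpolation $\|g\|^2\le\eta\|g\|_s^2+C_\eta\|g\|_{-1}^2$ (valid because $W^s\hookrightarrow L^2$ is compact) then yields a \emph{local} compactness estimate with an arbitrarily small coefficient in front of the graph norm. Choosing that coefficient small relative to the possibly huge commutator constant kills the error term, and only after this does the surviving multiple of $\|f\|^2$ become small enough to absorb into the left-hand side. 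Your proof needs this extra localization of the commutator errors to the finite type region; as written, it does not close.
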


\begin{remark} The definition of $\lambda_{(p,q)}$ is closely connected  to 
the so-called compactness  estimates (see \eqref{comp-est} in the proof of
Theorem \ref{thm1}) as well as 
Morey-Kohn-H\"{o}rmander formula (see, for example, 
\cite[Proposition 4.3.1]{ChenShawBook} or \cite[Proposition
2.4]{StraubeBook}) and property $(P)$ of Catlin. 

One can show that the Morey-Kohn-H\"{o}rmander formula  implies that for  a
smooth bounded pseudoconvex $\D\subset \C^n,$ a non-positive function $b\in
C^2(\Dc),$  and $u\in Dom(\dbar)\cap Dom(\dbar^*)\cap C^1_{(p,q)}(\Dc)$ we have 
\[\sum_{J,K}'\sum_{j,k=1}^n\int_{\D}e^b\frac{\partial^2 b}{\partial z_j\partial
\overline{z}_k}u_{J,jK}\overline{u_{J,kK}}dV\leq \|\dbar u\|^2+\|\dbar^*u\|^2.\]
where $ u=\sum'_{J,K}\sum_{k=1}^n u_{J,kK}dz^J\wedge
d\overline{z_k}\wedge d\overline{z_K}$ and the prime indicates that the sum is
taken over strictly increasing $(p,q-1)$-tuples $(J,K).$ If the domain $\D$
satisfies property $(P)$ then one can choose $b$ to be bounded from below by
$-1$ and with arbitrarily large complex Hessian on the boundary of $\D.$ Then on
a small neighborhood on the boundary the Hessian is still large. Hence
$\lambda_{(p,q)}(U)$ will be arbitrarily large for a sufficiently small
neighborhood $U$ of $b\D.$ 
\end{remark}

We would like to give a simple example below to show that one can use this
characterization to show that, in some cases, compactness of the $\dbar$-Neumann
problem excludes analytic disks from the boundary. We do not claim any 
originality in this example as it is a special case of Catlin's result  
\cite[Proposition 1]{FuStraube01}.     

\begin{example} 
Let $\D$ be a smooth bounded pseudoconvex domain in $\C^2$ such that
$\D\subset \{z\in \C^2:\text{Im}(z_2)<0\}$ and 
$\{z\in \C^2:\text{Im}(z_2)=0,|z_1|^2+|z_2|^2<1\}\subset b\D.$ 

\noindent \textit{Claim:} The $\dbar$-Neumann operator on $\D$ is not compact. 

\noindent \textit{Proof of the Claim:} There
exist positive numbers $a_1<a_2$ such that 
\[D_1\times W_1 \subset \D\cap \{z\in \C^2:|z_1|^2+|z_2|^2<1\}\subset D_2\times
W_2\] where 
$D_1=\{z\in \C:|z|<2/3\},D_2=\{z\in \C:|z|<2\},$ and  
\begin{align*}
 W_1&=\{z=re^{i\theta}\in \C:0<r<a_1, -2\pi/3<\theta<-\pi/3\},\\ 
W_2&=\{z=re^{i\theta}\in \C:0<r<a_2, -4\pi/3<\theta<\pi/3\}.
\end{align*}
 Let $\phi_j(z_1,z_2)=f(z_1)g_j(z_2)d\zb_1$ where $f \in C^{\infty}_0(D_1)$ and
$f\not\equiv 0.$ Later on we will choose 
$g_j\in C^{\infty}_0(\{z\in \C: |z|<j^{-2}\})$  so that 
$\phi_j \in Dom(\dbar)\cap Dom(\dbar^*).$ There exists $a_3>0$ such that 
$D_1\times W\subset \D $, where $W=\{z\in \C:\text{Im}(z)<0, |z|<a_3\},$ and
$\phi_j(z_1,z_2)=0$ for $z\in \Dc\setminus D_1\times W$ and  $j^{-2}<a_3.$ 
Then for $j^{-2}<a_3$ we have 
\begin{align*}
\frac{\|\dbar \phi_j\|^2+\|\dbar^* \phi_j \|^2}{\|\phi_j\|^2} &= 
\frac{ \left\|
\frac{\partial g_{j}(z_2)}{\partial
\zb_2}f(z_1)\right\|^2+
\left\|g_j(z_2) \frac{\partial f(z_1)}{\partial z_1
}\right\|^2}{\|g_j(z_2)f(z_1)\|^2} \\
&\leq \frac{\left\|
\frac{\partial g_{j}(z_2)}{\partial
\zb_2}\right\|^2_{L^2(W_2)}
\|f\|_{L^2(D_2)}}{\|g_j(z_2)\|^2_{L^2(W_1)}\|f\|_{L^2(D_1)}} +
\frac{\left\|
\frac{\partial f(z_1)}{\partial z_1
}\right\|^2_{L^2(D_1)}\|g_j(z_2)\|^2_{L^2(W)}
}{\|f(z_1)\|^2_{L^2(D_1)}\|g_j(z_2)\|^2_{L^2(W)}} \\
&\leq \frac{\left\|
\frac{\partial g_{j}(z_2)}{\partial
\zb_2}\right\|^2_{L^2(W_2)}}{\|g_j(z_2)\|^2_{L^2(W_1)}} +
\frac{\left\|
\frac{\partial f(z_1)}{\partial z_1
}\right\|^2_{L^2(D_1)}}{\|f(z_1)\|^2_{L^2(D_1)}} 
\end{align*}
Let us choose real valued non-negative functions 
$\chi_j \in C^{\infty}_0(-j^{-2},j^{-2})$ such that $\chi_j(-t)=\chi_j(t)$ and
$\chi(t) =1$ for $|t|\leq \frac{1}{4j^2}.$ Since $z^{-2}$ is not integrable on
$W_1\cap B(0,\ep)$ for any $\ep>0,$ we can choose a positive real number
$\alpha_j$ so that 
\[\int_{W_2\cap B(0,1/j)}
\frac{\left|\chi'_j\left(|z_2|^2\right)\right|^2}{|z_2-i\alpha_j|^2}
dV(z_2) \leq \int_{W_1\cap B(0,1/j)}
\frac{\left|\chi_j\left(|z_2|^2\right)\right|^2}{|z_2-i\alpha_j|^2} dV(z_2).\]
Now we define  
$g_j(z_2)=\chi_j\left(|z_2|^2\right) \tau_j(z_2)(z_2-i\alpha_j)^{-1}$
where $\tau_j\in C^{\infty}(\C)$ such that  $\tau_j(z)\equiv 1 $ for
$\text{Im}(z)\leq 0$ and $\tau_j(z)\equiv 0 $ for $\text{Im}(z)\geq \alpha_j/2.$
 Then   we have 
$\phi_j\in C^{\infty}_{0,(0,1)}(U_j)\cap Dom(\dbar^*)$ where 
$U_j= \{z\in \C:|z|<2^{-1}+j^{-1}\}\times \{z\in \C:|z|<j^{-2}\}$ 
and  
\[\left\|\frac{\partial g_j}{\partial \zb_2}\right\|_{L^2(W_2\cap B(0,1/j))}
\leq \|g_j \|_{L^2(W_1\cap B(0,1/j))}.\]
Hence, we constructed a sequence of $(0,1)$-forms $\{\phi_j\}$ such
that $\phi_j\in C^{\infty}_{0,(0,1)}(U_j)\cap Dom(\dbar^*)$ where 
\[K=\{z\in \C^2:|z_1|\leq 1/2, z_2=0\}=\bigcap_{j=1}^{\infty} U_j\subset b\D\] 
and $\frac{\|\dbar\phi_j\|^2+\|\dbar^* \phi_j \|^2}{\|\phi_j\|^2} $ stays 
bounded as $j\to \infty.$ Hence, by Theorem \ref{thm1}, the $\dbar$-Neumann
operator on $\D$ is not compact.
\end{example}

\section*{Proof of Theorem \ref{thm1}}

\begin{proof}[Proof of Theorem \ref{thm1}] We will show the equivalences for
$0\leq p\leq n$ and $1\leq q \leq n-1.$ The proof can be mimicked for the case
$q=0$ using the following: compactness of $N_0$ is equivalent to the following
compactness estimate: for all $\varepsilon >0$ there exists $D_{\varepsilon}>0$
such that 
\[\|g\|^2\leq \varepsilon
\|\overline{\partial}g\|^2+D_{\varepsilon}\|g\|_{-1}^2 \text{ for } g\in
(Ker\overline{\partial})^{\bot}\cap Dom(\overline{\partial})\] 
 
First let us prove that $(i)$ implies $(ii).$ Assume that the
$\overline{\partial}$-Neumann operator of $\D$ is compact, and there exist $K
\subset b\D$ and $M>0$ such that $\lambda_{(p,q)}(U)<M$ for all open
neighborhoods $U$ of $K.$ We may assume that there exist  sequences  of open
neighborhoods $\{U_k\}$ of $K$ and nonzero $(p,q)$-forms $\{f_k\}$ such that 
\begin{itemize}
\item[i.] $U_{k+1}\Subset U_k, K\subset \bigcap_{k=1}^{\infty}U_k\subset
b\D,f_k\in Dom(\overline{\partial}^* )\cap C_{0,(p,q)}^{\infty}(U_k),$
\item[ii.] $\|f_k\|^2=1,\text{ and } \|\overline{\partial}f_k\|^2+\| 
\overline{\partial}^*f_k\|^2<M \text{ for } k=1,2,3,\cdots $
\end{itemize}
Since  $K\subset \bigcap_{k=1}^{\infty}U_k \subset b\D$ (hence $K$ has measure 0
in $\C^n$) and $f_k\in C_{0,(p,q)}^{\infty}(U_k),$ by passing to a subsequence
if necessary, we may assume that $\|f_k-f_l\|^2\geq 1/2.$ 
Compactness of the $\overline{\partial}$-Neumann operator is equivalent to the
following so called compactness estimate (see 
\cite[Proposition 4.2]{StraubeBook} or \cite[Lemma 1]{FuStraube01}): for all
$\varepsilon >0$ there exists $D_{\varepsilon}>0$ such that
\begin{equation}\label{comp-est}\|g\|^2\leq \varepsilon
(\|\overline{\partial}g\|^2+\|\overline{\partial}^*g\|^2)+D_{\varepsilon}\|g\|_{
-1}^2 \text{ for } g\in Dom(\overline{\partial}^*)\cap
Dom(\overline{\partial})\end{equation}
 Choose $\varepsilon=\frac{1}{16M}.$ Since $Dom(\overline{\partial}^* )\cap
C_{0,(p,q)}^{\infty}(U_k) \subset Dom(\overline{\partial}^*)\cap
Dom(\overline{\partial}) $ using \eqref{comp-est} and ii. above we get
\begin{equation}\label{eq1}
\|f_k-f_l\|^2_{-1}\geq \frac{1}{4D_{\varepsilon}}>0 \text{ for } k\neq l 
\end{equation} 
The imbedding from $L^2(\D)$ to $W^{-1}(\D)$ is compact and $\{f_k\}$ is a
bounded sequence in $L^2_{(p,q)}(D).$ Hence $\{f_k\}$ has a convergent
subsequence in $W^{-1}_{(p,q)}(\D)$. This contradicts with \eqref{eq1}. 

 $(ii)$ obviously implies $(iii)$ so we will skip this part. 

Next let us prove that  $(iii)$ implies $(i).$ Let $K$ be the set of infinite
type points in $b\D$ and  
$u\in Dom(\overline{\partial}^*)\cap C_{(p,q)}^{\infty}(\overline{\D})$. Assume
that $\lambda_{(p,q)}(U_k)>k$ where $\{U_k\}$ is a sequence of open
neighborhoods of $K$ such that $U_{k+1}\Subset U_k$ and $ K\subset
\bigcap_{k=1}^{\infty}U_k\subset b\D.$ Let $\varphi_k\in C^{\infty}_0(U_k)$ such
that $0\leq  \varphi_k \leq 1$ and $\varphi_k \equiv 1$ in a neighborhood of
$K.$ Define $\psi_k=1-\varphi_k .$  Notice that $\psi_k$ is supported away from
$K.$ In following estimates, $C_k$ and $C_{k,\ep}$ are general constants meaning
that  the constants depend on the subscripts only but they might change at each
step. Away from $K$ we have subelliptic estimates as $b\D \setminus K$ is the
set of finite type points (see \cite{Catlin87}). Hence, there exists $s>0$ for
all $\varepsilon>0$
there exists $D_{\varepsilon}>0$ such that 
\begin{align} 
\| \psi_k u \|^2  &\leq  \varepsilon \| \psi_k u\|^2_s+D_{\varepsilon}\|\psi_k
u\|^2_{-1} \nonumber \\ 
&\leq \varepsilon C_k(\| \overline{\partial}(\psi_k
u)\|^2+\|\overline{\partial}^*(\psi_k
u)\|^2)+C_{k,\varepsilon}\|u\|^2_{-1}\label{ft-est} \\
&\leq  \varepsilon C_k(\| \overline{\partial}u \|^2+\|\overline{\partial}^*
u\|^2+\|u\|^2)+C_{k,\varepsilon}\|u\|^2_{-1}\nonumber
\end{align}
The first inequality follows because $L^2$ imbedds compactly into $W^s$ for
$s>0.$ We used the compactness estimate for the second inequality. If we use
$\lambda_{(p,q)}(U_k)>k$ we get:
\begin{align} 
\|\varphi_ku\|^2 &\leq \frac{1}{k}(\| \overline{\partial}(\varphi_k
u)\|^2+\|\overline{\partial}^*(\varphi_ku)\|^2)\nonumber \\
&\leq \frac{1}{k}(\| \overline{\partial} u \|^2+\|\overline{\partial}^*u
\|^2)+D_k\|\phi_ku\|^2 \label{eqn2}
\end{align}
where $\phi_k\equiv 0$ in a neighborhood of $K, D_k>0,$ and
$\phi_k\equiv 1$ in a neighborhood of the support of $\varphi_k.$ Calculations
that are similar to ones in \eqref{ft-est} show that
\begin{equation} \label{ft-west}
\|\phi_ku\|^2\leq \varepsilon' \tilde C_k(\| \overline{\partial}u
\|^2+\|\overline{\partial}^* u\|^2+\|u\|^2)+\tilde
C_{k,\varepsilon'}\|u\|^2_{-1}
\end{equation}
By choosing $\varepsilon ,\varepsilon' >0$ small enough and combining 
\eqref{ft-est} and \eqref{ft-west}  we get the following estimate: for all
$k=1,2,3,\cdots$ there exists $M_k>0$ such that 
\begin{equation} \label{pre-est}
\|u\|^2\leq \frac{2}{k}
(\|\overline{\partial}u\|^2+\|\overline{\partial}^*u\|^2+\|u\|^2)+M_k\|u\|_{-1}
^2 \text{ for } u\in Dom(\overline{\partial}^*)\cap
C_{(p,q)}^{\infty}(\overline{\D})
\end{equation}
We note that $Dom(\overline{\partial}^*)\cap C_{(p,q)}^{\infty}(\overline{\D})$
is dense in $Dom(\overline{\partial}^*)\cap Dom(\overline{\partial}) .$
Therefore, the above estimate \eqref{pre-est} holds on
$Dom(\overline{\partial}^*)\cap Dom(\overline{\partial}) .$ That is, the
$\overline{\partial}$-Neumann operator of $\D$ is compact on $(p,q)$-forms for
$0\leq p\leq n$ and $1\leq q \leq n-1.$ 
\end{proof}

\section*{Acknowledgement}
The author would like to thank his advisor, Emil Straube, for  suggesting the
problem and fruitful discussions, and Mehmet \c{C}elik for valuable comments on
a preliminary version of this manuscript.

\end{document}